\documentclass[12pt]{amsart}
\usepackage{amscd,verbatim}

\renewcommand{\L}{\mathbb{L}}
\newcommand{\Q}{\mathbf{Q}}
\newcommand{\Z}{\mathbf{Z}}

\renewcommand{\epsilon}{\varepsilon}

\newcommand{\End}{\operatorname{End}}
\newcommand{\Hom}{\operatorname{Hom}}

\newtheorem{thm}{Theorem}

\newtheorem{prop}{Proposition}
\theoremstyle{definition}

\theoremstyle{remark}
\newtheorem{rk}{Remark}

\begin{document}
\title{Smash-nilpotent cycles on abelian $3$-folds}
\author{Bruno Kahn}
\address{Institut de Math\'ematiques de Jussieu\\175--179, rue du Chevaleret\\75013 Paris\\France}
\email{kahn@math.jussieu.fr}
\author{Ronnie Sebastian}
\address{Tata Institute of Fundamental Research\\Homi Bhabha Road\\Mumbai 400 005\\India}
\email{ronnie@math.tifr.res.in}
\date{April 30, 2009}
\begin{abstract}  We show that homologically trivial algebraic cycles on a $3$-dimensional
abelian variety are smash-nilpotent.
\end{abstract}
\subjclass[2000]{14C15, 14K05.}
\maketitle

\section*{Introduction} Let $X$ be a smooth projective variety over a field $k$. An algebraic
cycle $Z$ on $X$ (with rational coefficients) is \emph{smash-nilpotent} if there exists $n>0$
such that $Z^n$ is rationally equivalent to $0$ on $X^n$. Smash-nilpotent cycles have the
following properties:

	\begin{enumerate}
\item The sum of two smash-nilpotent cycles is smash-nilpotent.
	\item The subgroup of smash-nilpotent cycles forms an ideal under the intersection product as
$(x\cdot y) \times (x\cdot y) \cdots \times (x\cdot y) = (x\times x \times \cdots \times
x)\cdot (y\times y \times \cdots \times y)$.
	\item On an abelian variety, the subgroup of smash-nilpotent cycles forms an ideal under the
Pontryagin product as $(x*y) \times (x*y) \times \cdots \times (x*y) = (x\times x \times \cdots
\times x)*(y\times y \times \cdots \times y)$ where $*$ denotes the Pontryagin product.
	\end{enumerate}

Voevodsky \cite[Cor. 3.3]{voe} and
Voisin \cite[Lemma 2.3]{voisin} proved that any cycle algebraically equivalent to $0$ is
smash-nilpotent. On the other hand, because of cohomology, any smash-nilpotent cycle is
numerically equivalent to $0$; Voevodsky conjectured that the converse is true \cite[Conj.
4.2]{voe}.

This conjecture is open in general. The main result of this note is:

\begin{thm}\label{t0} Let $A$ be an abelian variety of dimension $\le 3$. Any homologically
trivial cycle on $A$ is smash-nilpotent.
\end{thm}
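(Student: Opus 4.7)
My plan is to exploit Beauville's eigenspace decomposition of $\operatorname{CH}^\bullet(A)_\Q$, the Fourier transform, and the Voevodsky--Voisin theorem. Let $g = \dim A \le 3$. Beauville's decomposition $\operatorname{CH}^i(A)_\Q = \bigoplus_s \operatorname{CH}^i_{(s)}(A)$, with $[n]^*$ acting as $n^{2i-s}$ on $\operatorname{CH}^i_{(s)}$, reduces the theorem to showing that each piece with $s \ge 1$ is smash-nilpotent; the six relevant pieces for $g=3$ are $\operatorname{CH}^1_{(1)}, \operatorname{CH}^2_{(1)}, \operatorname{CH}^2_{(2)}, \operatorname{CH}^3_{(1)}, \operatorname{CH}^3_{(2)}, \operatorname{CH}^3_{(3)}$. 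The piece $\operatorname{CH}^1_{(1)}(A) = \operatorname{Pic}^0(A)\otimes\Q$ is algebraically trivial and so smash-nilpotent by Voevodsky--Voisin; and every summand $\operatorname{CH}_0(A)_{(s)}$ is algebraically trivial as well (any degree-zero $0$-cycle is), settling $\operatorname{CH}^3_{(1)}, \operatorname{CH}^3_{(2)}, \operatorname{CH}^3_{(3)}$ directly.

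Next I would invoke the Beauville--Fourier transform $\sF : \operatorname{CH}^i_{(s)}(A) \iso \operatorname{CH}^{g-i+s}_{(s)}(\hat A)$. Being built from pullback, intersection with the Poincar\'e class, and pushforward --- each of which preserves the smash-nilpotent ideal --- $\sF$ transports smash-nilpotence between $A$ and $\hat A$. For $g=3$ it sends $\operatorname{CH}^2_{(2)}(A)$ to $\operatorname{CH}^3_{(2)}(\hat A) = \operatorname{CH}_0(\hat A)_{(2)}$, which is smash-nilpotent by the previous paragraph; hence $\operatorname{CH}^2_{(2)}(A)$ is smash-nilpotent too. The only piece where this strategy fails is $\operatorname{CH}^2_{(1)}(A)$, since $\sF$ sends it to $\operatorname{CH}^2_{(1)}(\hat A)$: it is Fourier-stable. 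And by Ceresa's theorem, for $A = J(C)$ with $g(C)=3$ generic this piece contains the class $[C]-[-1]^*[C]$, which is \emph{not} algebraically trivial, so Voevodsky--Voisin cannot be invoked on it.

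The core of the theorem is therefore a direct proof that $\operatorname{CH}^2_{(1)}(A)$ lies in the smash-nilpotent ideal for every abelian threefold. My plan here is a correspondence argument reducing to the Ceresa-type class $W = [C] - [-1]^*[C]$ on $J(C)$ for curves $C$ of genus $\le 3$, followed by an explicit construction on $J(C)^N$ of a rational equivalence witnessing $W^{\times N} = 0$ for some $N$. The natural route is to lift the computation to the symmetric products $C^{(n)}$ (birational to $J(C)$ when $n=g$) and push forward via the Abel--Jacobi maps $C^{(n)} \to J(C)$, so that $W^{\times N}$ becomes the image of a manageable combination of ``symmetrised'' and ``antisymmetrised'' diagonal-type cycles on $C^{nN}$. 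The hypothesis $g \le 3$ is essential here, because it keeps the dimensions of these symmetric products small enough for an explicit rational equivalence to be writeable down; exhibiting such a rational equivalence is the principal technical step and constitutes the real content of the theorem.
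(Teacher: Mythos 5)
Your reduction via the Beauville decomposition is sound in outline and close in spirit to the paper's (which packages the case analysis into Beauville's result that for $g=3$ the Griffiths group is a quotient of $CH^2_1(A)=CH^2(A)_{[3]}$), and your treatment of $CH^1_{(1)}$, of the pieces $CH^3_{(s)}$, and of $CH^2_{(2)}$ via algebraic triviality and the Fourier transform is correct. Two small points there: you should also dispose of the $s=0$ components, since a homologically trivial cycle can a priori have a nonzero piece in $CH^i_{(0)}$ (for $g\le 3$ these pieces contain no nonzero homologically trivial classes, e.g.\ via $\sF\colon CH^2_{(0)}(A)\iso CH^1_{(0)}(\hat A)$, but this needs saying); and the hypothesis $g\le 3$ is used only in this reduction, not in the hard step, contrary to what you suggest at the end.

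The genuine gap is the piece $CH^2_{(1)}(A)$, which you correctly identify as the crux but do not prove. You propose to reduce to Ceresa-type classes $W=[C]-\langle -1\rangle^*[C]$ and then to write down an explicit rational equivalence $W^{\times N}\sim 0$ on symmetric products, and you state yourself that exhibiting this equivalence ``is the principal technical step.'' Neither half of this plan is carried out: the reduction of all of $CH^2_{(1)}(A)$ for an arbitrary abelian threefold to Ceresa classes on Jacobians is not justified, and no explicit rational equivalence of the required kind is known or likely to be accessible by hand. The paper's route is entirely different and non-explicit: any $\beta\in CH^2_{(1)}(A)=CH^2(A)_{[3]}$ is skew, i.e.\ $\langle -1\rangle^*\beta=-\beta$, so, viewed as a morphism $\L^2\to h(A)$, it is killed by the even Chow--K\"unneth projectors and factors through $h^{odd}(A)$. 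Since $\L^2$ is evenly finite-dimensional and $h^{odd}(A)$ is oddly finite-dimensional in Kimura's sense (using $S^{2g+1}(h^1(A))=h^{2g+1}(A)=0$ by \cite{shermenev} together with \cite[Prop.~5.10]{kimura}), Kimura's Proposition 6.1 yields $\beta^{\otimes N+1}=0$ with $N$ the sum of the odd Betti numbers; the nilpotence comes from symmetric-group idempotents acting on tensor powers, not from an exhibited equivalence, and the argument applies to skew cycles on abelian varieties of any dimension. Without this (or an equivalent) input, your proof does not close.
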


In characteristic $0$ we can improve ``homologivally trivial" to ``numerically trivial",
thanks to Lieberman's theorem \cite{lieberman}.

Nori's results in \cite{nori} give an example of a group of  smash-nilpotent cycles which is  not finitely generated modulo algebraic equivalence. The proof of Theorem \ref{t0} actually gives the uniform bound $21$ for the degree of smash-nilpotence on this group, see Remark \ref{r1}. 
See Proposition \ref{t2} for partial results in dimension $4$. 

\section{Beauville's decomposition, motivically}\label{bmot}

For any smooth projective variety $X$ and any integer $n\ge 0$, we write as in \cite{B2}
$CH^n_\Q(X)=CH^n(X)\otimes \Q$, where $CH^n(X)$ is the Chow group of cycles of codimension $n$
on $X$ modulo rational equivalence.  

Let $A$ be an abelian variety of dimension $g$. For
$m\in\Z$, we denote  by
$\langle m\rangle$ the endomorphism of multiplication by
$m$ on
$A$, viewed as an algebraic correspondence. In \cite{B2}, Beauville introduces an eigenspace
decomposition of the rational Chow groups of $A$ for the actions of the operators $\langle
m\rangle$, using the Fourier transform. Here is an equivalent definition: in the category of
Chow motives with rational coefficients, the endomorphism $1_A\in
\End(h(A))=CH^g_\Q(A\times A)$ is given by the class of  the diagonal $\Delta_A$. We have
the canonical Chow-K\"unneth decomposition of Deninger-Murre
\[1_A=\sum_{i=0}^{2g} \pi_i\]
\cite[Th. 3.1]{denmur}, where the $\pi_i$ are orthogonal idempotents and $\pi_i$ is characterised by
$\pi_i\langle m\rangle^* = m^i\pi_i$  for any $m\in\Z$. This yields a canonical Chow-K\"unneth decomposition
of the Chow motive $h(A)$ of $A$:
\[h(A)=\bigoplus_{i=0}^{2g} h^i(A), \quad h^i(A)= (A,\pi_i)\]
(see \cite[Th. 5.2]{scholl}). Then, under the isomorphism
\[CH^n_\Q(A) = \Hom(\L^n,h(A))\]
(where $\L$ is the Lefschetz motive) we have
\[CH^n(A)_{[r]}:= \{x\in CH^n_\Q(A)\mid \langle m\rangle^* x= m^r x\; \forall m\in\Z\} =
\Hom(\L^n,h^r(A)).\]

\begin{rk} In Beauville's notation, we have
\[CH^n(A)_{[r]} = CH^n_{2n-r}(A).\]

We shall use his notation in \S \ref{s3}.
\end{rk}

\section{Skew cycles on abelian varieties}
Let $\beta\in CH^*_\Q(A)$. Assume that $\langle -1\rangle^*\beta = -\beta$:  we say that $\beta$ is
\emph{skew}. This implies that $\beta$ is homologically equivalent  to $0$. 

For $g\le 2$, the Griffiths group of $A$ is $0$ and there is nothing to prove. For $g=3$, the
Griffiths group of $A$ is a quotient of
$CH^2(A)_{[3]}$ \cite[Prop. 6]{B2}; thus
Theorem \ref{t0} follows from the more general

\begin{prop}\label{t1} Any skew cycle on an abelian variety is smash-nil\-po\-tent.
\end{prop}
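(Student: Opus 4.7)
My plan is to combine the Beauville decomposition of Section \ref{bmot} with Kimura's theorem that $h(A)$ is a finite-dimensional motive. First I would decompose $\beta = \sum_r \beta_r$ along $CH^n(A)_{[r]}$; since $\langle -1\rangle^*$ acts as $(-1)^r$ on $CH^n(A)_{[r]}$, the skewness of $\beta$ forces $\beta_r = 0$ for every even $r$. Because sums of smash-nilpotent cycles are smash-nilpotent (property (1) of the introduction), it suffices to treat a single homogeneous component $\beta \in CH^n(A)_{[r]}$ with $r$ odd, and via Section \ref{bmot} to view it as a morphism $\beta : \L^n \to h^r(A)$ of Chow motives with $\Q$-coefficients.

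Next I would rephrase smash-nilpotence as a vanishing statement for a symmetric power of $h^r(A)$. The external power $\beta^{\times N} \in CH^{nN}_\Q(A^N)$ corresponds, under the isomorphism of Section \ref{bmot}, to the tensor power $\beta^{\otimes N} : \L^{nN} \to h^r(A)^{\otimes N}$. Here $S_N$ acts trivially on $\L^{nN}$ and by permutation of tensor factors on $h^r(A)^{\otimes N}$, and $\beta^{\otimes N}$ is equivariant; since we have $\Q$-coefficients, the symmetrizing idempotent $e_N = \frac{1}{N!}\sum_{\sigma \in S_N}\sigma$ lies in the relevant endomorphism algebra, and $\beta^{\otimes N}$ factors through the symmetric power motive $\mathrm{Sym}^N h^r(A) := e_N (h^r(A)^{\otimes N})$. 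Smash-nilpotence of $\beta$ thus reduces to proving $\mathrm{Sym}^N h^r(A) = 0$ for some $N$.

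For this vanishing, I would invoke K\"unnemann's identity $h^r(A) = \Lambda^r h^1(A)$ together with Kimura--O'Sullivan finite-dimensionality of $h(A)$. The motive $\mathrm{Sym}^N h^r(A)$ is a direct summand of $h^1(A)^{\otimes rN}$, and is therefore Kimura-finite. In Betti realization, the usual super-sign rules turn motivic $\mathrm{Sym}^N$ of a super-odd object into a classical $\Lambda^N$, so the realization of $\mathrm{Sym}^N h^r(A)$ is $\Lambda^N \Lambda^r H^1(A,\Q)$, which vanishes for $N > \binom{2g}{r}$. A Kimura-finite motive whose Betti realization vanishes is itself zero, since its identity is then numerically trivial, hence nilpotent by Kimura, and an idempotent that is nilpotent must be $0$. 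Therefore $\mathrm{Sym}^N h^r(A) = 0$ for such $N$ and $\beta^{\times N} = 0$.

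The main hurdle is the reliance on Kimura--O'Sullivan finite-dimensionality of the motive of an abelian variety; everything else is formal bookkeeping with the Chow--K\"unneth decomposition and Schur functors. For $g = r = 3$ the numerical bound comes out to $N = \binom{6}{3} + 1 = 21$, explaining the constant of Remark \ref{r1}.
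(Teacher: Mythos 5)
Your argument is correct and takes essentially the same route as the paper: skewness kills the even Chow--K\"unneth components, and the conclusion rests on the Kimura--O'Sullivan finite-dimensionality of $h(A)$ coming from Shermenev/K\"unnemann. The only real difference is that where the paper factors $\beta$ through $h^{odd}(A)$ and cites Kimura's Proposition 6.1 (a morphism from an evenly to an oddly finite-dimensional motive is tensor-nilpotent), you reprove that proposition in the special case of an invertible source --- factoring $\beta^{\otimes N}$ through $\mathrm{Sym}^N h^r(A)$ and showing this symmetric power vanishes via realization plus Kimura's nilpotence theorem --- which has the merit of making the bound $N=b_r(A)+1$ of Remark \ref{r1} transparent.
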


This applies notably to the Ceresa cycle \cite{ceresa}, for any genus.

\begin{proof} We may assume $\beta$ homogeneous, say, $\beta\in CH^n_\Q(A)$. 
View $\beta$ as a morphism $\L^n\to h(A)$ in the category of Chow motives.  Thus, for all $i$:
\[-\pi_i\beta = \pi_i\langle -1\rangle^*\beta = (-1)^i\pi_i\beta\]
hence $\pi_i\beta = 0$ for $i$ even.

This shows that $\beta$ factors through a morphism
\[\tilde\beta:\L^n\to h^{odd}(A)\]
with $h^{odd}(A) = \bigoplus_{i \text{ odd}} h^i(A)$. 

But $\L^n$ is evenly finite-di\-men\-sion\-al and $h^{odd}(A)$ is oddly finite-dimen\-sional in
the sense of  S.-I. Kimura. (Indeed, $S^{2g+1}(h^1(A))=h^{2g+1}(A)=0$ by \cite[Theorem]{shermenev},  and a direct summand of an odd tensor power of an oddly finite-dimensional motive is oddly finite dimensional by \cite[Prop. 5.10 p. 186]{kimura}.) Hence the conclusion follows from \cite[prop. 6.1 p.
188]{kimura}. \end{proof}

\begin{rk}\label{r1}  Kimura's proposition 6.1 shows in fact that all
$z\in\break \Hom(\L^n,h^{odd}(A))$ verify $z^{\otimes N+1} = 0$ for a fixed $N$, namely, the
sum of the odd Betti numbers of $A$. If $z\in \Hom(\L^n,h^i(A))$ for some odd $i$, then we may
take for $N$ the $i$-th Betti number of $A$. Thus, for $i=3$ and if $A$ is a $3$-fold, we find
that all $z\in \Hom(\L,h^3(A))$ verify $z^{\otimes 21}=0$.
\end{rk}

\section{The $4$-dimensional case}\label{s3}

\begin{prop}\label{t2}  If $g=4$, homologically trivial cycles on $A$, except perhaps those
which occur in parts $CH^2_0(A)$ or $CH^3_2(A)$ of the Beauville decomposition, are
smash-nilpotent.
\end{prop}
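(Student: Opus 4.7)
My plan combines Proposition \ref{t1}, Beauville's description of the Pontryagin ring of zero-cycles on an abelian variety, and Beauville's Fourier isomorphism, to dispose of every piece of the Beauville decomposition except the two Fourier-invariant ones mentioned in the statement.

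Proposition \ref{t1} handles every summand $CH^p(A)_{[r]}$ with $r$ odd, so I restrict to $r$ even. For divisors, $CH^1(A)_{[1]} = \operatorname{Pic}^0(A)\otimes\Q$ is algebraically trivial and thus smash-nilpotent by Voevodsky--Voisin, while $CH^1(A)_{[2]} \cong NS(A)\otimes\Q$ has vanishing homologically trivial part. For zero-cycles I invoke Beauville's theorem that $CH^g(A)_\Q$ is generated, as a Pontryagin algebra, by its single graded piece $CH^g(A)_{(1)} = CH^g(A)_{[2g-1]}$: every $CH^g(A)_{[r]}$ with $r<2g$ then lies in the Pontryagin ideal generated by an odd-$r$ piece, which is itself smash-nilpotent by Proposition \ref{t1}; since smash-nilpotent cycles form an ideal under the Pontryagin product by property~(3) of the introduction, every homologically trivial zero-cycle is smash-nilpotent, and $CH^g(A)_{[2g]}=\Q$ has no homologically trivial element.

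For the middle codimensions $p=2,3$ on the $4$-fold I use Beauville's Fourier isomorphism
\[\mathcal{F}\colon CH^p(A)_{[r]} \iso CH^{g+p-r}(\hat A)_{[2g-r]},\]
which is induced by the Poincar\'e bundle on $A\times\hat A$ and hence preserves both homological triviality and smash-nilpotence. For $g=4$ it gives
\[CH^2(A)_{[2]} \iso CH^4(\hat A)_{[6]} \quad\text{and}\quad CH^3(A)_{[6]} \iso CH^1(\hat A)_{[2]},\]
which are smash-nilpotent by the zero-cycle and divisor parts above, applied to $\hat A$.

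The main obstacle, and the reason for the two exclusions, is that $\mathcal{F}$ pairs $CH^2(A)_{[4]} = CH^2_0(A)$ with $CH^2(\hat A)_{[4]}$ and $CH^3(A)_{[4]} = CH^3_2(A)$ with $CH^3(\hat A)_{[4]}$: these pieces are sent to their own analogues on the dual variety rather than reduced to divisors or to zero-cycles, so the method gives no traction on them and they must be excluded.
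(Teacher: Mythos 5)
Your argument is correct and lands on the same two excluded pieces, but it is organized differently from the paper's proof. The paper quotes two of Beauville's results as black boxes --- the vanishing $CH^p_s(A)=0$ for $s<0$ and $p\in\{0,1,g-2,g-1,g\}$, and the algebraic triviality of $CH^p_p(A)$ and of $CH^g_s(A)$ for $s>0$ --- and combines them with Proposition \ref{t1} and the Voevodsky--Voisin theorem. You instead unwind those citations: you push everything down to divisors and zero-cycles via the Fourier isomorphism $\sF\colon CH^p_s(A)\iso CH^{g-p+s}_s(\hat A)$, and for zero-cycles you replace ``algebraically trivial, hence smash-nilpotent'' by the Pontryagin-ideal property of the introduction together with the smash-nilpotence of the odd piece $CH^g_1(A)$ --- which is essentially how Beauville proves the results the paper cites. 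Your route buys two things: it makes visible why exactly $CH^2_0(A)$ and $CH^3_2(A)$ resist the method (they are the Fourier-self-paired pieces), and it explicitly disposes of $CH^3_0(A)\cong NS(\hat A)\otimes\Q$, a case that the two results quoted in the paper do not literally cover and which the paper's proof leaves implicit.

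Two small points to patch. First, after restricting to even $r$ you should also account for the a priori possible summands with $s=2p-r<0$, namely $CH^1(A)_{[4]}$ and $CH^2(A)_{[6]}$; both vanish, either by Beauville's Proposition 3a as in the paper, or by your own Fourier formula, which sends $CH^2(A)_{[6]}$ into $CH^0(\hat A)_{[2]}=0$. Second, to transport smash-nilpotence back along $\sF$ you need the \emph{inverse} Fourier transform to be given by a correspondence as well (it is, being essentially the Fourier transform of $\hat A$ up to sign and $\langle -1\rangle^*$); this is worth stating, since you use the implication ``$\sF(x)$ smash-nilpotent $\Rightarrow x$ smash-nilpotent'' and not just its converse.
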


\begin{proof} Let $A$ be an abelian variety and let $\hat{A}$ denote its dual abelian variety. 
We know, from \cite{B2}, the following:
\begin{enumerate}
		\item $CH^{p}_{s}(A)=0$ for $p \in \{0,1,g-2,g-1,g\}$ and $s<0$. \cite[Prop. 3a]{B2}.
	\item $CH^{p}_{p}(A)$ and $CH^{g}_{s}(A)$ consist of cycles algebraically equivalent to 0 for
all values of $p$ and all values of $s>0$. \cite[Prop. 4]{B2}.
\end{enumerate}

For $g=4$, using these results and Proposition \ref{t1} one can conclude smash
nilpotence for homologically trivial cycles which are not in $CH^2_0(A)$ or $CH^3_2(A)$. Note that, with the notation of \S \ref{bmot},
\[CH^3_2(A)= \Hom(\L^3,h^4(A)),\quad CH^2_0(A) = \Hom(\L^2,h^4(A)).\]

In the
case of $CH^2_0(A)$, the problem is whether there are any homologically
trivial cycles: in view of the above expression, this is conjecturally not the case, cf.
\cite[Prop. 5.8]{jannsen}.
	\end{proof}	

\begin{rk} Some of these arguments also follow from a paper of Bloch and Srinivas
\cite{BS}.
\end{rk}

\section*{Acknowledgements} The second author would like to thank Prof. J.P. Murre for his course on Motives at TIFR in January 2008, from where he learnt  about the conjecture. Both authors would like to thank V. Srinivas for discussions leading to this result. The second author is being funded by CSIR.

\end{document}